\definecolor{aliceblue}{rgb}{0.9, 0.95, 1.0}
\newcommand{\pslc}{{\mathrm{PSL}_2 (\mathbb{C})}}
\newcommand{\pslr}{{\mathrm{PSL}_2 (\mathbb{R})}}
\newcommand{\slc}{{\mathrm{SL}_2 (\mathbb{C})}}
\newcommand\TT{{\mathcal T}}
\newcommand\T{{\mathcal T}}
\newcommand\PP{{\mathcal P}}
\newcommand\Te{Teichm\"{u}ller }
\newtheorem{theorem}{Theorem}[section]
\newtheorem{prop}[theorem]{Proposition}
\newtheorem{thm}{Theorem}[section]
\theoremstyle{definition}
\newcommand*{\bigchi}{\mbox{\large$\chi$}}
\newcommand{\cp}{\mathbb{C}\mathrm{P}^1}
\title[Monodromy representations of meromorphic projective structures]{Monodromy representations\\ of meromorphic projective structures}
\author{Subhojoy Gupta}
\address{Department of Mathematics, Indian Institute of Science,
Bangalore, India}
\email{subhojoy@iisc.ac.in}
\author{Mahan Mj}
\address{School of Mathematics, Tata Institute of Fundamental Research, Homi 
Bhabha Road, Mumbai 400005, India}
\email{mahan@math.tifr.res.in}
\subjclass[2010]{Primary: 30F30, 57M50; Secondary: 34M03, 30F60}
\keywords{Projective structures on surfaces, meromorphic quadratic  differentials, decorated character variety.}
\begin{document}

\begin{abstract} We determine the image of the monodromy map for meromorphic projective structures with poles of orders greater than two. This  proves the analogue of a theorem of Gallo-Kapovich-Marden, and answers a question of Allegretti and Bridgeland in this case.  Our proof uses coordinates on the moduli space of framed representations arising from the work of Fock and Goncharov.
\end{abstract}

\maketitle

\section{Introduction}

For a $k$-tuple $\mathfrak{n} = (n_1,n_2, \ldots , n_k)$ where $k\geq 1$ and each $n_i\geq 3$, let $\mathcal{P}_g(\mathfrak{n})$ denote the space of marked meromorphic projective structures on a surface of genus $g$ and $k$ labelled punctures,  such that the $i$-th puncture corresponds to a pole of order $n_i$. Throughout, we shall assume that the Euler characteristic of the underlying surface $S_{g,k}$  is negative. 

The monodromy map  
\begin{equation}\label{mmap}
\Phi : \mathcal{P}_g(\mathfrak{n}) \to {\widehat{\bigchi}}_{g,k}(\mathfrak{n}) 
\end{equation}
to the \textit{moduli space of framed representations} or the decorated character variety, records 
\begin{itemize}
\item the usual monodromy of the projective structure on the punctured surface, which is a representation  $\rho: \pi_1(S_{g,k})\to \pslc$, and 
\item  a configuration of $(n_i-2)$ points on $\cp$, which are the asymptotic values of the developing map at the $i$-th puncture, for each $i=1,2,\ldots k$.
\end{itemize}
See \S2 for more details and definitions.

In Theorem 6.1 of \cite{AllBrid}, Allegretti and Bridgeland  showed that the image of the monodromy map $\Phi$ is contained in the subspace of \textit{non-degenerate} framed representations  $ {\widehat{\bigchi}}_{g,k}(\mathfrak{n})^\ast$ that forms a cluster variety -- see Proposition \ref{dyl}, and \S2.4.2 for definitions.\\

In this note we prove:

\begin{thm}\label{thm1} 
The image of the monodromy map $\Phi$ is precisely the set  $ {\widehat{\bigchi}}_{g,k}(\mathfrak{n})^\ast$ of non-degenerate framed representations. 
\end{thm}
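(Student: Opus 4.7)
The plan is to establish the inclusion opposite to that already proved in \cite{AllBrid}: every non-degenerate framed representation $(\rho,\beta) \in \widehat{\bigchi}_{g,k}(\mathfrak{n})^\ast$ is realized as the monodromy of some meromorphic projective structure in $\mathcal{P}_g(\mathfrak{n})$. The overall strategy adapts the Gallo--Kapovich--Marden approach (pants decomposition plus grafting) to the punctured setting, using the Fock--Goncharov cluster coordinates as the bookkeeping device for the framing.

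The first step is a local construction near each puncture. For each $i$, I would build a meromorphic projective structure on a punctured disk $D_i^\ast$ with a pole of order $n_i$ at $p_i$ whose peripheral monodromy equals $\rho(\gamma_i)$ for a small loop $\gamma_i$ around $p_i$, and whose developing map realizes the $n_i-2$ asymptotic values prescribed by $\beta$ at $p_i$. Such local models can be produced by integrating the Schwarzian equation $S(f) = q_i$ for a suitable meromorphic quadratic differential $q_i$ with a pole of order $n_i$; the $n_i-2$ Stokes rays of this irregular ODE correspond precisely to directions where asymptotic values are attained, and the free parameters in the principal part of $q_i$ are exactly enough to specify them. Non-degeneracy of $(\rho,\beta)$ ensures the compatibility of the prescribed monodromy with the prescribed framing.

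Next comes the global assembly. Choose an ideal triangulation $\Delta$ of $S_{g,k}$; non-degeneracy of $(\rho,\beta)$ guarantees that all Fock--Goncharov $A$-coordinates on the edges of $\Delta$ are well-defined and nonzero. Working triangle-by-triangle (or, equivalently, on a pants decomposition adapted to $\Delta$), I would produce a projective structure realizing the corresponding sub-representation of $\rho$ with the framing imposed at each ideal vertex, and then glue these pieces along the edges of $\Delta$ using the shear/gluing data recorded by the Fock--Goncharov coordinates. Along interior curves where the naive gluing fails to reproduce $\rho$, one applies $2\pi$-grafting, exactly as in the compact GKM argument, to correct the holonomy while leaving the local models near the punctures untouched.

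The main obstacle is the interaction between grafting surgeries and the asymptotic framings: in the closed GKM setting grafting is a free operation on projective structures with a given monodromy, but here one must verify that grafting along a curve in the bulk of $S_{g,k}$ does not perturb the Stokes data, and hence the $n_i-2$ asymptotic values, at any puncture. A secondary technical difficulty is to treat the strata of $\widehat{\bigchi}_{g,k}(\mathfrak{n})^\ast$ where some $\rho(\gamma_i)$ is parabolic or of finite order, for which the local Stokes analysis degenerates and the local models of the first step require a separate argument. Once these points are settled, equality of the constructed framed monodromy with the target $(\rho,\beta)$ can be verified coordinate-wise in a Fock--Goncharov chart, completing the proof.
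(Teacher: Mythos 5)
Your proposal is a plan rather than a proof, and the two obstacles you flag at the end are precisely the points where it breaks down; neither is resolved, and the second one is fatal to the route you chose. The Gallo--Kapovich--Marden machinery (pants decomposition, curves with loxodromic holonomy, $2\pi$-grafting to correct holonomy) requires the underlying representation $\rho$ to be non-elementary, and in the closed case that hypothesis is forced. Here it is not: a framed representation $(\rho,\beta_\rho)$ can be non-degenerate (neither ($D1$) nor ($D2$) holds) while $\rho$ itself is elementary, unipotent, or even trivial, provided $\beta_\rho$ takes at least three values in general position. For such $\hat\rho$ there is no pants decomposition with loxodromic holonomy on the cuffs, and the bulk of your global assembly step has nothing to stand on. Separately, your gluing step is underspecified at exactly the hard point: matching the local models at the punctures (with their prescribed Stokes data/asymptotic values) to the pieces in the interior, and verifying that the holonomy-correcting graftings do not disturb the framing, is the content of the theorem, not a technicality one can defer. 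The remark that the ``free parameters in the principal part of $q_i$ are exactly enough'' to hit prescribed asymptotic values also needs an argument; it is a nontrivial surjectivity statement about an irregular ODE.

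The paper takes a different and more uniform route that sidesteps the non-elementary issue entirely. Given a non-degenerate $\hat\rho$, Theorem \ref{ab} supplies an ideal triangulation $T$ for which all Fock--Goncharov cross-ratios $C(\hat\rho,a)$ (the $X$-type coordinates, not the $A$-coordinates you invoke) are defined and nonzero. Following Fock--Goncharov, these cross-ratios determine a $\rho$-equivariant pleated plane $\Psi:\widetilde{S}\to\mathbb{H}^3$ built triangle-by-triangle along the dual tree of $\widetilde{T}$. The moduli $\lvert C(\hat\rho,a)\rvert$ are shear coordinates for a crowned hyperbolic surface $\widehat{S}\in\TT_g(\mathfrak{n})$, and the arguments give bending angles, i.e.\ a finite-leaved measured lamination $\lambda$ supported on the arcs of $T$ together with the crown boundary geodesics with infinite weight. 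Proposition \ref{prop:graft} then says $\mathrm{Gr}(\widehat{S},\lambda)\in\mathcal{P}_g(\mathfrak{n})$, and its associated pleated plane is $\Psi$ by construction, so its decorated monodromy is $\hat\rho$. No case analysis on the conjugacy type of $\rho(\gamma_i)$, no non-elementarity hypothesis, and no separate local model at the poles is needed: the crown ends and the infinite-weight boundary leaves produce the poles of the correct orders automatically. If you want to salvage your approach, you would at minimum need a substitute for the GKM pants decomposition that works for elementary $\rho$, which is essentially what the pleated-plane construction provides.
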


\medskip
This answers a question of Allegretti-Bridgeland (see \S1.7.1 of  in \cite{AllBrid}) in the case that the order of every pole is greater than two. As they noted in Theorem 1.3 of their paper, the analogous result for an ``unpunctured disk" (where $g=0,k=1$) is a consequence of the work of Sibuya in \cite{Sib-book}. \\

Theorem \ref{thm1} is the analogue of the theorem of Gallo-Kapovich-Marden  in \cite{GKM} for projective structures on a \textit{closed} surface of genus at least two. For a closed surface, the monodromy group must necessarily be non-elementary and must admit a lift to $\slc$, and their work showed that any such non-elementary representation $\rho$ arises in the image of the corresponding monodromy map.

The work of Gallo-Kapovich-Marden relies on finding an appropriate  pants decomposition of the closed surface, depending on $\rho$, and gluing together the projective (Schottky) structures on the pairs of pants comprising the decomposition. 
 The idea of the proof of Theorem \ref{thm1} is to consider an \textit{ideal triangulation} of the underlying marked bordered surface, and use coordinates on the moduli space of framed representations due to the work of Fock-Goncharov in \cite{FG}.  This is possible for a non-degenerate framed representation by a result of Allegretti-Bridgeland (see Theorem \ref{ab}). These coordinates can be interpreted as determining a pleated surface in $\mathbb{H}^3$, and we can then apply  the geometric (grafting) description of a meromorphic projective structure that we developed in \cite{GM1}, extending ideas of Thurston. \\
 
This note can thus be considered as a sequel of \cite{GM1}, though we shall not need the main result of that paper. Moreover in \S2, we shall recount the salient features of our previous work, to make the present article reasonably self-contained. In forthcoming work \cite{GM3}, we shall use similar ideas to prove analogous results for meromorphic projective structures with poles of order two.  \\

\textbf{Acknowledgments.} SG  acknowledges the SERB, DST (Grant no. MT/2017/000706), the UGC Center for Advanced Studies grant, and the Infosys Foundation for their support. Research of MM partly supported by a DST JC Bose Fellowship, Matrics research project grant  MTR/2017/000005  and CEFIPRA  project No. 5801-1. MM was also partially supported by the grant 346300 for IMPAN from the Simons Foundation and the matching 2015-2019 Polish MNiSW fund.  Both authors thank Dylan Allegretti for illuminating comments on a draft version of this article, and for contributing Proposition \ref{dyl}.  The paper was completed while SG was visiting Osaka University; he is grateful for their hospitality, and thanks Shinpei Baba for his invitation. We thank the anonymous referee for a careful reading and helpful comments.

\section{Preliminaries}

In this section we provide definitions and terminology, some of which have already been introduced in \S1. We shall also state some results of Fock-Goncharov (\cite{FG}), Allegretti-Bridgeland (\cite{AllBrid}) and \cite{GM1} that we use later. The identification of the subset of non-degenerate framed representations with a cluster variety as in Proposition \ref{dyl} is a new observation, that could be of independent interest.  Throughout this section  $\mathfrak{n} = (n_1,n_2, \ldots , n_k)$ shall be a $k$-tuple of integers where $k\geq 1$ and each $n_i\geq 3$.

\subsection{Marked bordered surface and its \Te space} 

A \textit{marked bordered surface} $S_{g}(\mathfrak{n})$ is an oriented surface of genus $g$ and $k$ labelled boundary components such that the $i$-th boundary component has $(n_i-2)$ distinguished points, for each $1\leq i\leq k$. These \textit{boundary points} are also labelled, and are cyclically ordered; two successive boundary points are endpoints of a \textit{boundary arc}. 

(Such a surface is denoted by $(\mathbb{S}, \mathbb{M})$  in \cite{AllBrid} and by $\widehat{S}$ in \cite{FG}.)\\

The \textit{\Te space of crowned hyperbolic surfaces} $\T_g(\mathfrak{n})$ is the space of marked hyperbolic metrics on  $S_{g}(\mathfrak{n})$, such that each boundary arc is a bi-infinite geodesic, up to the usual equivalence (of an isometry preserving the marking). 

Here, a ``marking" refers to a choice of a homeomorphism with the surface-with-boundary $S_{g}(\mathfrak{n})$ up to a relative homotopy (that fixes the boundary components pointwise).

Recall that a \textit{hyperbolic crown} is a hyperbolic annulus that is bounded by a closed geodesic boundary on one side, and a cyclically ordered chain of bi-infinite geodesics on the other, any successive pair of which enclose a \textit{boundary cusp}. Thus,  a surface in  $\T_g(\mathfrak{n})$ can be thought of as being obtained by attaching $k$ {hyperbolic crowns} to a compact hyperbolic surface of genus $g$ and $k$ geodesic boundary components, such that the $i$-th crown has $(n_i-2)$ boundary cusps. 

See \S3.2 of \cite{GM1} for more details, and Lemma 2.16 of \cite{GupWild} for a parametrization of  $\T_g(\mathfrak{n})$.

\subsection{Meromorphic projective structures}

A \textit{marked complex projective structure}, or ``projective structure" for short,  on a marked (possibly open) surface $S$ is a maximal atlas of charts to $\cp$ such that the transition maps on overlaps of charts are restrictions of M\"{o}bius maps, that is, of elements of $\pslc = \text{Aut}(\cp)$. 

Note that any such projective structure also determines a marked complex structure on $S$, that is, the underlying surface is a point in the \Te space of $S$.

Passing to the universal cover $\widetilde{S}$, the charts above can be used to define a \textit{developing map} $f:\widetilde{S} \to \cp$ that is $\rho$-equivariant, where $\rho:\pi_1(S) \to \pslc$ is the \textit{monodromy representation} of the projective structure.

The developing map $f$ is defined up to  post-composition by a M\"{o}bius transformation $A \in \pslc$, with $\rho$ defined up to the corresponding conjugation; the equivalence class of the pair $(f,\rho) \sim (A\circ f, A\circ \rho \circ A^{-1})$ is well-defined and can be thought of as an equivalent definition of a projective structure. \\

\textit{Example.}  A hyperbolic (or Fuchsian) structure on $S$ is an example of a projective structure; the hyperbolic plane (or Poincar\'{e}) disk can be thought of as a round disk in $\cp$, and hence the image of the developing map lies in $\cp$, and moreover the monodromy is a Fuchsian subgroup of $\pslr$, that is the group of \textit{real} M\"{o}bius transformations.\\

Fix a projective structure $P_0$ on a surface $S$, with underlying Riemann surface $X$. Then for any other projective structure $P$, we can associate a holomorphic quadratic differential $q$ on $X$ by taking the Schwarzian derivative of the conformal immersion $f:\widetilde{X} \to \cp$ obtained by post-composing the charts for $P$, with those of $P_0$. 

We shall refer to $q$ as recording the ``difference " of the two projective structures, namely $P - P_0$.

Conversely, given a holomorphic quadratic differential $q$ on $X$ (underlying $P_0$),  consider the Schwarzian equation
\begin{equation}\label{eq:schw}
 u^{\prime\prime} + \frac{1}{2} q u = 0 
\end{equation}
on the universal cover $\widetilde{X}$. The ratio of a pair of linearly independent solutions then determines the developing map for a new projective structure $P$.

In fact, projective structures on a \textit{closed} Riemann surface $X$ form an affine space for the vector space of holomorphic quadratic differentials on $X$. See, for example, \S2 of \cite{Hubbard}. \\

Let $X$ be a closed Riemann surface of genus $g$, and let $\mathsf{P}$ be a set of $k$ labelled points on it.  Fix a standard (holomorphic) projective structure $P_0$ on the closed surface $X$. 
A \textit{meromorphic projective structure} on the punctured Riemann surface $X \setminus \mathsf{P}$ is a projective structure such that the difference with $P_0$  is given by a holomorphic quadratic differential on $X \setminus \mathsf{P}$ that extends to a pole of order greater than two at each $p \in \mathsf{P}$.

(Here, as in \cite{GM1}, we shall exclude poles of order two; see \cite{AllBrid} for the complications that arise when they are present.)\\

Given a meromorphic quadratic differential $\hat{q}$ on $X$ obtained for a  meromorphic projective structure on $X\setminus P$ as above, such that the $i$-th point in $\mathsf{P}$  has a pole of order $n_i \geq 3$, for each $1\leq i\leq k$, the horizontal directions of $\hat{q}$ determine $(n_i-2)$ labelled directions at the pole. Thus, when we consider a real oriented blow-up of the puncture to a boundary circle, these directions determine $(n_i-2)$ distinguished points on the circle, and by such blow-ups for each point in $\mathsf{P}$, we obtain a marked bordered surface $S_{g}(\mathfrak{n})$ as defined in \S2.1. 
Thus, it is convenient to think of a meromorphic projective structure as a geometric structure on  $S_{g}(\mathfrak{n})$.   

The space $\PP_g(\mathfrak{n})$ of meromorphic projective structures on  $S_{g}(\mathfrak{n})$ can thus be identified with the space of triples $(X,\mathsf{P}, \hat{q})$, where $\hat{q}$ is a meromorphic quadratic differential on a Riemann surface $X$ of genus $g$,  with $k$  poles at the labelled points given by $\mathsf{P}$ of orders determined by the $k$-tuple $\mathfrak{n}$.
See \S3.1 of \cite{GM1} for more details, and for a parametrization of  the space $\PP_g(\mathfrak{n})$.  \\

Finally,  from the classical work studying the asymptotic behaviour of solutions of the Schwarzian equation \eqref{eq:schw} near a pole, we can deduce that the developing map of a meromorphic projective structure has exactly $(n_i-2)$ asymptotic values at a pole of order $n_i\geq 3$. See \S4.1 of \cite{GM1}, and Corollary 3.1 of that paper,  for a more precise statement.  These asymptotic values form part of the ``decorated" monodromy of the meromorphic projective structure, as mentioned in \S1; see also \S2.4.1.

\subsection{Measured laminations and grafting} 

In \cite{GM1} we developed a geometric description of meromorphic projective structures, that we briefly recall. \\

First, a \textit{measured lamination} on a crowned hyperbolic surface in $\T_g(\mathfrak{n})$ is a closed set that is a union of disjoint complete geodesics, equipped with a transverse measure that is invariant under transverse homotopy, together with the boundary arcs (geodesics) each with infinite weight.   Such an object is in fact a topological object defined on the underlying surface $S_g(\mathfrak{n})$, since it is determined by the transverse measures it induces on finitely many curves.

See \S3.3 of \cite{GM1} for definitions of measured laminations on crowned hyperbolic surfaces, and of the space of measured laminations ${ML}_g(\mathfrak{n})$.  In this article, the measured laminations that will appear would comprise a collection of weighted, disjoint geodesic lines between boundary cusps, apart from the geodesic sides of the crown ends that each have infinite weight. We shall henceforth assume that an element of  ${ML}_g(\mathfrak{n})$ is such a measured lamination. \\

The procedure of \textit{grafting}  a crowned hyperbolic surface $X$ along a measured lamination $\lambda$ on it can be described as follows; see  \S2.2 of \cite{GM1}, for example, for more details: 

The universal cover $\widetilde{X}$ can be identified with a convex subset of the Poincar\'{e} disk, and the (Fuchsian) projective structure on $X$ has developing image $\Omega$  in a round disk in $\cp$. In particular, the  developing image of the lifts of the leaves of $\lambda$ to the universal cover $\widetilde{X}$ is a collection of circular arcs.  The process of grafting changes this developing image to a new domain $\Omega^\prime$ obtained by rotating one side of each such circular arc by an angle equal to the weight of the leaf, thus inserting a wedge (or \textit{lune}) at its place on $\cp$.  Note that to each arc  $\gamma$ of infinite weight, we attach an ``infinite lune" that can be thought of as a semi-infinite chain of copies of $\cp$ as follows: take a collection of copies of $\cp$ with an identical slit at $\gamma$, indexed by $\mathbb{N}$, and identify the right side of the slit on the $i$-th copy, with  the left side of the slit on the $(i+1)$-th copy, for each $i\in \mathbb{N}$. By projecting each copy to the original $\cp$, we obtain an (immersed) domain $\Omega^\prime$ in $\cp$ which in fact is invariant under a new M\"{o}bius group $\Gamma ^\prime < \pslc$, and the quotient $\Omega^\prime/\Gamma^\prime$ is a new projective surface that we denote by $\text{Gr}(X,\lambda)$.\\

In Proposition 4.2 of \cite{GM1} we prove:

\begin{prop}\label{prop:graft} The grafting operation on a crowned hyperbolic surface  $X\in \TT_g(\mathfrak{n})$ along a measured lamination $\lambda$ on it  results in a projective structure  $P= \text{Gr}(X,\lambda)$ that lies in $\mathcal{P}_g(\mathfrak{n})$. 
\end{prop}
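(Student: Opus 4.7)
The statement contains two assertions: that $\mathrm{Gr}(X,\lambda)$ carries a projective structure, and that this structure is meromorphic with poles of the prescribed orders at $k$ punctures. For the first, the Fuchsian projective structure on $X$ develops $\widetilde X$ into a round disk in $\cp$, so the lifts of the leaves of $\lambda$ are circular arcs. Grafting along a finite-weight leaf rotates one side of the arc by a Möbius transformation fixing the endpoints, and grafting along an infinite-weight boundary arc inserts a semi-infinite chain of copies of $\cp$ slit along a common arc, where successive slits are again identified by Möbius maps. All transitions in the resulting atlas on $\Omega'/\Gamma'$ are therefore in $\pslc$, and $\mathrm{Gr}(X,\lambda)$ is a $\cp$-structure.

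The substantive part is identifying the underlying surface and the pole orders. Away from the crown ends the construction is Thurston's classical grafting, which preserves the topology of the compact core. Near the $i$-th crown there are $(n_i-2)$ boundary arcs of infinite weight, and my plan is to show: (i) the grafted crown, together with its $(n_i-2)$ infinite lunes (plus any finite lunes on interior leaves terminating at its cusps), is conformally a punctured disk, contributing a single puncture $p_i$ to a closed Riemann surface $X_0$; (ii) the developing map extends to a neighborhood of $p_i$ with exactly $(n_i-2)$ asymptotic values, one per infinite lune; and (iii) the Schwarzian of the developing map, taken relative to a standard projective structure, has a pole of order $n_i$ at $p_i$.

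The main obstacle is (i) combined with (iii): one must rule out that the end assembles into a wilder analytic type, and then read off the Schwarzian pole order. The natural strategy is to compare with the explicit local model given by the quadratic differential $z^{-n_i}\, dz^2$ on a punctured disk, whose developing map $f = \int \sqrt{q}$ has exactly $(n_i-2)$ asymptotic values corresponding to $(n_i-2)$ horizontal half-planes at the pole. Matching the hyperbolic geometry of the crown (with its $(n_i-2)$ cusps) with the projective geometry of the attached infinite lunes under this model would complete the identification and yield the correct pole order, since the $(n_i-2)$ infinite lunes correspond precisely to the $(n_i-2)$ horizontal half-planes of the local model.
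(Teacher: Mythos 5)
The paper does not actually prove this proposition in situ --- it is quoted from Proposition 4.2 of \cite{GM1}, where the argument is carried out. Your outline points in the same direction as that reference: the first paragraph (all transition maps of the grafted atlas are M\"obius, so $\text{Gr}(X,\lambda)$ is a $\cp$-structure) is correct and essentially complete, and comparison with the local model $z^{-n_i}\,dz^2$ near each crown end is indeed the right tool for pinning down the pole order.

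That said, as written there is a genuine gap: the entire analytic content of the proposition sits in your steps (i) and (iii), and these are announced as a plan rather than proved. Two points in particular are not established. First, you must show that the end obtained by attaching the $(n_i-2)$ infinite lunes (each a semi-infinite chain of copies of $\cp$) to the grafted crown has \emph{finite conformal type}, i.e.\ is conformally a punctured disk; this is not automatic, since an infinite grafting could a priori yield an end that is conformally an annulus of finite modulus or a disk, in which case no puncture forms and there is no Schwarzian pole to speak of. Second, even granting (i) and (ii), step (iii) does not follow formally: knowing that the developing map has exactly $(n_i-2)$ asymptotic values at the puncture does not by itself imply that the Schwarzian derivative extends \emph{meromorphically} there --- one must first rule out an essential singularity before the classical asymptotic theory of the Schwarzian equation can convert the count of asymptotic values into a pole order. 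The phrase ``matching the hyperbolic geometry of the crown with the projective geometry of the attached infinite lunes'' is precisely where this work happens, and it is left unexecuted. To close the gap you would need to exhibit an explicit conformal coordinate near the puncture (e.g.\ the natural coordinate of the model differential, in which the crown's boundary cusps and the inserted lunes correspond to the horizontal half-planes at the pole) and verify directly that the Schwarzian of the developing map in that coordinate has principal part of order exactly $n_i$.
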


The main result of \cite{GM1} is  to show that in fact, \textit{any} element of  $\mathcal{P}_g(\mathfrak{n})$ arises from such a construction. This is a generalization of Thurston's grafting theorem (see, for example, \cite{KamTan}).\\

A key geometric object associated with the inverse map, that also plays an important role in the proof of Thurston's theorem, is the  $\rho$-equivariant map 
\begin{equation}\label{pplane}
\Psi_P: \widetilde{X} \to \mathbb{H}^3
\end{equation}
that, briefly described, is the envelope of the convex hull of maximal round disks in the developing image of $P$. (Here $\rho$ is the usual holonomy of the projective structure $P$ on the underlying punctured surface.) The image of $\Psi$ is a piecewise-totally geodesic surface called a \textit{pleated plane}, obtained by starting with $\widetilde{X}$ on an equatorial disk in $\mathbb{H}^3$, and then ``bending" along a collection of disjoint geodesic lines that are the leaves of $\widetilde{\lambda}$. See Theorem 2.1 of \cite{GM1} for more on this construction.

\subsection{Moduli space of framed representations}

\subsubsection{Definitions}
Let $\mathcal{R}_{g,k} :=\text{Hom}(\pi_1(S_{g,k}), \pslc)$ be the representation variety of surface-group representations of a punctured surface $S_{g,k}$ (having genus $g$ and $k$ punctures) to $\pslc$. 

As in pg. 8 of \cite{FG}, we define the \textit{Farey set} $\mathcal{F}_\infty(g, \mathfrak{n})$ (abbreviated to $\mathcal{F}_\infty$) as follows: given a marked bordered surface $S_g(\mathfrak{n})$,  choose a hyperbolic metric on the  marked bordered surface $S_g(\mathfrak{n})$ such that the $k$ boundary components are geodesic; the universal cover can then be identified with a convex subset $\mathcal{C}$ of the Poincar\'{e} disk.
As an  abstract set, $\mathcal{F}_\infty$ is the collection of points on $\partial \mathcal{C}$ that are the lifts of the distinguished points on the $k$ boundary components. Thus, $\mathcal{F}_\infty$ comes equipped with an action of  $\pi_1(S_{g,k})$, that acts by deck-translations on $\mathcal{C}$. \\

Then, for a marked bordered surface $S_g(\mathfrak{n})$, the \textit{moduli space of framed representations} ${\widehat{\bigchi}}_{g,k}(\mathfrak{n})$, introduced by Fock and Goncharov,  is the space
\begin{equation*}
{\widehat{\bigchi}}_{g,k}(\mathfrak{n})  = \left\{ (\rho, \beta_\rho) \text{ } \ \vert\ \text{ } \rho \in \mathcal{R}_{g,k} \text{ and } \beta_\rho:\mathcal{F}_\infty \to \cp \text{ is a }\rho\text{-equivariant map} \right\}\bigg\slash\!\bigg\slash\pslc 
\end{equation*}
where the GIT-quotient  above can be thought of as the quotient by the usual action of $\pslc$ by conjugation, after removing the ``bad orbits" to ensure the quotient space is Hausdorff.   Note that if we fix a fundamental domain for the $\pi_1(S_{g,k})$-action on $\mathcal{F}_\infty$, then the map $\beta_\rho$ assigns to each lift of a boundary component of $S_g(\mathfrak{n})$ a configuration of points in $\cp$; this is the additional ``decoration" at that boundary. 

Moreover, the quotient space is in fact a moduli stack -- see Lemma 1.1 and Definition 2.1  of \cite{FG}, or \S4.1 and Lemma 9.1 of \cite{AllBrid}. Indeed, the space prior to taking the GIT-quotient, in the definition of ${\widehat{\bigchi}}_{g,k}(\mathfrak{n})$ above,  is precisely the set of isomorphism classes of ``rigidified framed local systems" described in Lemma 4.2 of \cite{AllBrid}.  For a discussion concerning other notions of such a ``decorated character variety", see \S1.6.3 of \cite{AllBrid}. \\

As described in the preceding sections, a meromorphic projective structure  $P\in \mathcal{P}_g(\mathfrak{n})$ uniquely determines a \textit{decorated monodromy} $\hat{\rho} \in {\widehat{\bigchi}}_{g,k}(\mathfrak{n})$, which defines the monodromy map $\Phi$ (see Equation \eqref{mmap}). 

\subsubsection{Non-degenerate framed representations} 

Following the definition in \S4.2 in \cite{AllBrid}, we say that a framed representation $\hat{\rho}\in {\widehat{\bigchi}}_{g,k}(\mathfrak{n})^\ast$ is \textit{degenerate} if one of the following properties are satisfied:\\
($D1$) there is some boundary arc $I$ and some lift $\tilde{I}$ in the universal cover, such that the endpoints of $\tilde{I}$ in $\mathcal{F}_\infty$ are assigned the same point in $\cp$ by $\beta_\rho$, \\
($D2$) there is a set of two points $\{p_-,p_+\} \in \cp$, such that the $\beta_\rho$-image of any point in $\mathcal{F}_\infty$ is one of them, and moreover the monodromy $\rho (\gamma)$ of any element $\gamma \in \pi_1(S_{g,k})$ preserves this pair. 

We say that a framed representation $\hat{\rho}$ is \textit{non-degenerate} if it is not degenerate. \\

\textit{Remark.} Conditions ($D1$) and ($D2$) above are equivalent to conditions (D1) and (D2) of \cite{AllBrid} respectively. Condition (D3) of \cite{AllBrid} is not relevant for this paper, since we assume all poles have order greater than two. Moreover, to translate from the language of \cite{AllBrid} to ours, note that a choice of a flat section in a neighborhood of a puncture $p$ is equivalent to choosing $\beta_\rho(\tilde{p}) \in \cp$ for a lift $\tilde{p} \in \mathcal{F}_\infty$ (the $\beta_\rho$-images of the other lifts of $p$ are determined by the $\rho$-equivariance condition).\\

As mentioned in the introduction, ${\widehat{\bigchi}}_{g,k}(\mathfrak{n})^\ast$ is the set of non-degenerate framed representations.  The work of Allegretti-Bridgeland shows that this is a Zariski-open subset of the moduli space of framed representations (see Lemma 4.5 of \cite{AllBrid}), and moreover, we have: 

\begin{thm}[Theorem 6.1 of \cite{AllBrid}]\label{abim} The image of the monodromy map $\Phi:\mathcal{P}_g(\mathfrak{n}) \to {\widehat{\bigchi}}_{g,k}(\mathfrak{n})$ lies in ${\widehat{\bigchi}}_{g,k}(\mathfrak{n})^\ast$.
\end{thm}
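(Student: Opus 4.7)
The plan is to translate the problem into the pleated plane picture of \cite{GM1} and show that both degeneracy conditions fail. By the main theorem of \cite{GM1}, every $P \in \mathcal{P}_g(\mathfrak{n})$ arises as $P = \text{Gr}(X, \lambda)$ for some $X \in \TT_g(\mathfrak{n})$ and $\lambda \in ML_g(\mathfrak{n})$, and carries the $\rho$-equivariant pleated plane $\Psi_P$ of Equation \eqref{pplane}. The first step is to pin down the dictionary: each point of $\mathcal{F}_\infty$ corresponds to a boundary cusp where two adjacent lifted boundary arcs of $\widetilde X$ meet, and $\Psi_P$ sends each lifted boundary arc to a bi-infinite geodesic in $\mathbb{H}^3$; the common endpoint in $\partial \mathbb{H}^3 = \cp$ of the two boundary geodesics meeting at a given boundary cusp is precisely the $\beta_\rho$-image of that point, and coincides with the corresponding asymptotic value of the developing map at the associated pole.

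Granted this dictionary, condition (D1) is immediate to rule out: the two endpoints of a lifted boundary arc map to the two endpoints of a bi-infinite geodesic in $\mathbb{H}^3$, which are necessarily distinct in $\cp$, so the two endpoints of the arc in $\mathcal{F}_\infty$ receive distinct $\beta_\rho$-values.

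Condition (D2) is where the work lies. Supposing for contradiction that there exist $p_\pm \in \cp$ with $\beta_\rho(\mathcal{F}_\infty) \subset \{p_-, p_+\}$ and $\rho$ preserving this pair, every boundary geodesic of $\Psi_P$ would have both endpoints in $\{p_-, p_+\}$, and by the (D1) step these would be distinct, so every such boundary geodesic would coincide with the single axis $\ell \subset \mathbb{H}^3$ joining $p_-$ to $p_+$. Cyclically traversing the $n_i - 2$ boundary cusps at the $i$-th crown end, the $\beta_\rho$-values would then alternate between $p_-$ and $p_+$; cyclic closure already forces $n_i - 2$ to be even for every $i$ and gives an immediate contradiction whenever some $n_i$ is odd.

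The residual case in which every $n_i$ is even is the main obstacle. Here I would exploit that $\Psi_P$ is a genuinely two-dimensional pleated immersion of the universal cover of a crowned hyperbolic surface of negative Euler characteristic: each complementary region of $\widetilde X \setminus \widetilde\lambda$ embeds isometrically in a totally geodesic $\mathbb{H}^2 \subset \mathbb{H}^3$, and the constraint that every boundary geodesic of $\Psi_P$ equals $\ell$ forces each such plane to contain $\ell$. A $\rho$-equivariant bookkeeping of the bending angles along the interior leaves of $\widetilde\lambda$, combined with the non-elementarity of the Fuchsian holonomy on the compact middle part of $X$, should contradict the supposed elementarity of $\rho$. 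An alternative route, closer in spirit to \cite{AllBrid}, is to invoke the Stokes analysis of the Schwarzian equation \eqref{eq:schw} near a pole of order $n_i \geq 3$: the resulting nontrivial Stokes matrices prevent the full set of asymptotic values at a pole from lying in any two-point subset compatible with the puncture monodromy.
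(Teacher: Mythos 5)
This statement is quoted from Allegretti--Bridgeland (Theorem 6.1 of \cite{AllBrid}); the present paper gives no proof of its own, and the cited proof goes through the asymptotic analysis of the Schwarzian equation \eqref{eq:schw} near each pole (distinct asymptotic values in adjacent Stokes sectors, plus a non-degeneracy argument for the framing), not through the grafting/pleated-plane picture you adopt. Your route is therefore genuinely different, and it leans on the main theorem of \cite{GM1} (surjectivity of grafting), a much heavier input than the local ODE analysis --- note the paper explicitly says it does not need that result. That would be acceptable if the argument closed, but it does not. Also, the ``dictionary'' you pin down at the start --- that the $\beta_\rho$-image of a boundary cusp is the common ideal endpoint under $\Psi_P$ of the two adjacent boundary geodesics --- is exactly where the analytic content of \cite{GM1} (\S4.1, Corollary 3.1) lives; asserting it is not proving it, though once granted your disposal of (D1) is fine.

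The decisive gaps are in (D2). First, the parity step is wrong as stated: $\beta_\rho$ is defined on $\mathcal{F}_\infty$, i.e.\ on \emph{lifts} of the distinguished points, so ``cyclically traversing the $n_i-2$ boundary cusps'' really means walking along a bi-infinite chain of lifts on which the puncture class $\gamma_i$ acts by a shift of $n_i-2$. Since $\rho(\gamma_i)$ is only assumed to preserve the pair $\{p_-,p_+\}$ and may swap the two points, strict alternation of values along the chain is perfectly consistent with $n_i-2$ odd (take $\rho(\gamma_i)$ to interchange $p_\pm$). So odd $n_i$ yields no contradiction, and the ``residual case'' you defer is in fact all of (D2). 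Second, for that case you offer only a sketch (``should contradict'', ``I would exploit''), and your fallback --- ``invoke the Stokes analysis'' of \eqref{eq:schw} --- is essentially an appeal to the Allegretti--Bridgeland proof you set out to replace. As it stands, condition (D2) is not ruled out and the proof is incomplete.
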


\subsection{Fock-Goncharov coordinates}

An \textit{ideal triangulation} of $S_{g}(\mathfrak{n})$ is a collection of interior arcs that determines a triangulation of the surface having all vertices at the distinguished points on the boundaries -- see \S9.1 of \cite{AllBrid} for an expository account.
Note that some triangles could have one or two of its edges that are boundary arcs on  $S_{g}(\mathfrak{n})$, between successive distinguished points.\\

Fix an ideal triangulation $T$ of $S_{g}(\mathfrak{n})$.
From the work of Fock-Goncharov in \cite{FG}, given a generic framed representation $\hat{\rho} \in {\widehat{\bigchi}}_{g,k}(\mathfrak{n})$, any arc $a\in T$ determines a (non-zero) complex number $C(\hat{\rho},a)$ as follows:

Passing  to the universal cover, a lift  $\widetilde{a}$ determines a quadrilateral $Q$ comprising the two triangles of $T$ that are adjacent to $\widetilde{a}$. The framed representation $\hat{\rho}$ determines a quadruple of ``flags". In our case these flags are points in $\cp$, corresponding to $Q$, and can be thought of as associated to the vertices of $Q$.  The complex number $C(\hat{\rho},a)$ is then defined to be the \textit{cross-ratio} of these four points in $\cp$.   This is well-defined, i.e.\ it does not depend on the choice of the lift of $a$, since  the cross-ratio is invariant under elements of $\pslc$.

The following is then a special case of the ``Decomposition Theorem" of Fock-Goncharov (Theorem 1.1 in \cite{FG}). See \S2 and \S3.1 of \cite{Palesi} for an expository account.

\begin{thm}[Fock-Goncharov] \label{fg} For $g, k, \mathfrak{n}$ and a triangulation $T$ of $S_g(\mathfrak{n})$ as above, there is a birational isomorphism 
\begin{equation}\label{phit}
\phi_T:{\widehat{\bigchi}}_{g,k}(\mathfrak{n}) \to (\mathbb{C}^\ast)^{\lvert T\rvert}
\end{equation}
 where the map assigns to a (generic) framed representation $\hat{\rho}$, the tuple of cross-ratios $\{C(\hat{\rho},a) \ \vert\  a \in T\}$. 
\end{thm}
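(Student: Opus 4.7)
The plan is to verify that $\phi_T$ is well-defined on a Zariski-open subset and then construct a rational inverse.

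First, for well-definedness: the cross-ratio of four points in $\cp$ is invariant under the diagonal $\pslc$-action, so two different lifts of an arc $a \in T$---which differ by a deck transformation whose action on flags is given by $\rho$---produce the same complex number $C(\hat\rho, a)$. This takes values in $\mathbb{C}^{\ast}$ on the open subset where the four flags of any quadrilateral are in general position (distinct). Thus $\phi_T$ is a rational map defined on a Zariski-open subset of $\widehat{\bigchi}_{g,k}(\mathfrak{n})$.

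For the inverse, I would fix a base triangle $\Delta_0$ of $T$ and a lift $\widetilde{\Delta_0}$ in the universal cover, and use the $\pslc$-conjugation freedom to normalize the three flags at its vertices to $(0,1,\infty) \in \cp$. For each edge $a$ of $\widetilde{\Delta_0}$, the prescribed cross-ratio $x_a \in \mathbb{C}^{\ast}$ uniquely determines the flag at the fourth vertex of the quadrilateral spanning $\widetilde{a}$, since a M\"obius transformation is determined by its values at three points. Iterating this flag-propagation across a spanning tree of triangles in the lifted triangulation assigns a point of $\cp$ to every vertex in $\mathcal{F}_\infty$. For each $\gamma \in \pi_1(S_{g,k})$, the unique M\"obius map sending the flags of $\widetilde{\Delta_0}$ to those of $\gamma\cdot\widetilde{\Delta_0}$ defines $\rho(\gamma)$, and the flag assignment is $\rho$-equivariant by construction, descending to a decoration $\beta_\rho : \mathcal{F}_\infty \to \cp$. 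The explicit formulas in both directions---cross-ratios as rational functions of the flags, and the fourth flag as a rational function of the cross-ratio and three flags---show that both $\phi_T$ and its inverse are rational.

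The main obstacle is the \emph{consistency} of the inverse construction: the propagation depends on a choice of spanning tree, and different choices must yield the same framed representation up to $\pslc$-conjugation. Equivalently, the $|T|$ cross-ratios must be algebraically independent parameters with no further global relation imposed by the topology of $S_g(\mathfrak{n})$. Fock-Goncharov handle this via a flip calculus: any two ideal triangulations are connected by a finite sequence of elementary flips (exchanging a diagonal of a quadrilateral), under which the cross-ratios transform by explicit rational formulas---the cluster $\mathcal{X}$-transformations. Checking that these flip formulas intertwine the flag reconstructions reduces consistency to a local Ptolemy-type computation inside a single quadrilateral. Combined with a dimension count matching $|T|$ to $\dim \widehat{\bigchi}_{g,k}(\mathfrak{n})$, this shows $\phi_T$ is a birational isomorphism onto $(\mathbb{C}^{\ast})^{|T|}$.
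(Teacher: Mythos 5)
Your proposal follows essentially the same route as the source: the paper does not prove this statement itself but cites it as a special case of the Decomposition Theorem of Fock--Goncharov (Theorem 1.1 of \cite{FG}, see also \cite{Palesi}), and the inverse construction you describe --- normalize the three flags of a base triangle to $0,1,\infty$ and propagate the fourth flag across each edge using the prescribed cross-ratio --- is exactly the construction the paper reproduces in \S 3.1, where it is noted that this is the proof of injectivity of $\phi_T$ (cf.\ Theorem 6.2 of \cite{FG}). One correction, though: the ``main obstacle'' you identify is not actually there. In the universal cover the dual graph of the lifted ideal triangulation $\widetilde{T}$ \emph{is} a tree (the paper calls it $\mathsf{T}$), so the propagation involves no choice of spanning tree and no consistency condition around cycles; the holonomy $\rho(\gamma)$ is simply read off as the unique M\"obius map matching the flags of $\widetilde{\Delta_0}$ with those of $\gamma\cdot\widetilde{\Delta_0}$, and equivariance follows from uniqueness of the tree propagation applied to the $\gamma$-translated data. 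The flip calculus and the cluster $\mathcal{X}$-transformations are needed only to compare $\phi_T$ with $\phi_{T'}$ for two different ideal triangulations (i.e.\ to glue the tori into a cluster variety), which is not part of the statement at hand; invoking a ``local Ptolemy-type computation'' here is a detour rather than a necessary step. With that paragraph removed, what remains is a correct sketch matching the paper's (i.e.\ Fock--Goncharov's) argument.
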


We shall also need the following result of Allegretti-Bridgeland (Theorem 9.1 of \cite{AllBrid}):

\begin{thm}[Allegretti-Bridgeland]\label{ab}  For any non-degenerate framed representation $\hat{\rho} \in {\widehat{\bigchi}}_{g,k}(\mathfrak{n})^\ast$ there is an ideal triangulation $T$ such that the map $\phi_T$ in Equation \eqref{phit} is defined for $\hat{\rho}$.
\end{thm}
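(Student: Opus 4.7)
The plan is to start with an arbitrary ideal triangulation $T_0$ of $S_g(\mathfrak{n})$ and modify it by a sequence of edge flips until arriving at a triangulation $T$ for which every Fock--Goncharov coordinate is a well-defined element of $\mathbb{C}^\ast$. For each interior arc $a$ in a triangulation, let $Q_a$ denote the quadrilateral in the universal cover formed by the two triangles containing a lift $\widetilde a$; the cross-ratio $C(\hat\rho,a)$ fails to lie in $\mathbb{C}^\ast$ precisely when $\beta_\rho$ assigns the same point of $\cp$ to two \emph{adjacent} vertices of $Q_a$, so call such an arc \emph{singular}. (A coincidence of $\beta_\rho$ at a pair of opposite vertices only gives $C(\hat\rho,a)=1$, which is harmless.) The goal is to produce a triangulation with no singular arcs.

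The key step is a propagation lemma for flips. Given a singular arc $a$ with coinciding flags at adjacent vertices $v_1, v_2$ of $Q_a$, consider the arc $a'$ obtained by flipping $a$ inside $Q_a$. A direct cross-ratio computation shows that either $a'$ is non-singular, or the common flag value $\beta_\rho(v_1)=\beta_\rho(v_2)$ is also attained at a vertex of some triangle of $T_0$ sharing an edge with $Q_a$. Attaching to each triangulation a weighted count of singular arcs and adjacent-vertex flag coincidences, one chooses in each step a flip that strictly decreases this potential; the only obstruction to termination is that every available flip continues to propagate the coincidence outward.

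If the process never terminates, the coincidences spread along the dual graph of $T_0$, which is connected, and $\pi_1(S_{g,k})$-equivariance forces every flag in $\mathcal{F}_\infty$ to take values in a $\rho$-invariant two-point set $\{p_-,p_+\}\subset \cp$, contradicting ($D2$). In the alternative case, where the propagating coincidence reaches a triangle having a boundary arc as one of its edges, we find a boundary arc whose endpoints in $\mathcal{F}_\infty$ are assigned a common flag, contradicting ($D1$). Either way, non-degeneracy of $\hat\rho$ forces the flipping process to terminate at a triangulation $T$ on which $\phi_T$ is defined. The delicate point is the propagation lemma itself, which requires a careful projective-geometry argument to exclude the possibility of a persistent singular arc whose flip does not entail any further flag coincidence in the surrounding triangles; this is where the precise form of the non-degeneracy conditions ($D1$) and ($D2$) must intervene.
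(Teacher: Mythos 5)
The paper does not prove this statement: it is quoted verbatim as Theorem~9.1 of \cite{AllBrid}, so there is no internal argument to compare yours against, and your attempt has to be judged on its own. Your general strategy (modify a triangulation by flips until all cross-ratios are defined, and show that a global obstruction to doing so is exactly one of ($D1$), ($D2$)) is a reasonable one, but the proposal contains a concrete error at its key step. You propose to remedy a singular arc $a$ --- one for which two \emph{adjacent} vertices $v_1,v_2$ of $Q_a$ carry the same flag --- by flipping $a$ itself. But the adjacent pairs of vertices of $Q_a$ are precisely the endpoints of the four \emph{sides} of $Q_a$, and the quadrilateral attached to the flipped arc $a'$ has the same four vertices and the same four sides; the pair $v_1,v_2$ is therefore still adjacent in $Q_{a'}$, and $a'$ is automatically still singular. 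The first alternative of your propagation lemma (``either $a'$ is non-singular\dots'') can never occur, so the lemma cannot drive any induction; the second alternative (the common flag value reappears in a neighbouring triangle) is also not true in general, since $\beta_\rho$ is arbitrary away from the coincidence. The object that must be flipped is not $a$ but the offending side $e=v_1v_2$: the correct bookkeeping is that $\phi_T$ is defined at $\hat\rho$ if and only if no edge of $T$ (interior or boundary) has its two endpoints sent to the same point of $\cp$ by $\beta_\rho$, and a boundary arc with this defect is exactly condition ($D1$) and is unfixable by any choice of $T$.

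Even after this repair, the hard part remains, and your sketch does not supply it. Flipping a bad interior edge $e$ may produce another bad edge $e'$ (this happens whenever both diagonals of the relevant quadrilateral join equal-flag vertices), so one must prove that if \emph{every} ideal triangulation contains a bad edge, then either some boundary arc is bad (giving ($D1$)) or all flags lie in a single $\rho$-invariant two-point set (giving ($D2$)). Your final paragraph asserts this as the outcome of ``coincidences spreading along the dual graph,'' but that is a restatement of the theorem's difficult direction rather than a proof: a priori the flip process could be blocked by many unrelated local coincidences involving different points of $\cp$, and extracting the single global pair $\{p_-,p_+\}$ required by ($D2$), together with its $\rho$-invariance, is where the substance of Allegretti--Bridgeland's argument lies. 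With a false key lemma and an unproven endgame, the proposal does not yet constitute a proof.
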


\medskip

We say that a framed representation $\hat\rho \in {\widehat{\bigchi}}_{g,k}(\mathfrak{n})$ is \textit{generic} with respect to an ideal triangulation $T$ of $S_g(\mathfrak{n})$, if  the map $\phi_T$ in equation \eqref{phit} is defined at $\hat\rho$. In that case, the endpoints of each edge of the triangle $T$,  including the boundary arcs of $S_g(\mathfrak{n})$, are assigned distinct points in $\cp$. Note that for a fixed $T$, the set of framed representations that are generic with respect to $T$ is a complex algebraic torus in the moduli space of framed representations. The set of framed representations in $ {\widehat{\bigchi}}_{g,k}(\mathfrak{n})$ which are generic with respect to some ideal triangulation forms the \textit{cluster variety} associated with $S_g(\mathfrak{n})$ - see \cite{FG2} or \cite{Dyl1} for precise definitions.\\

The following is an observation of Dylan Allegretti, that he communicated to us: 

\begin{prop}\label{dyl} The cluster variety associated with $S_g(\mathfrak{n})$ coincides with the space ${\widehat{\bigchi}}_{g,k}(\mathfrak{n})^\ast$  of non-degenerate framed representations.
\end{prop}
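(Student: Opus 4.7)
The plan is to prove the proposition by establishing the two inclusions between the cluster variety associated with $S_g(\mathfrak{n})$ and the set ${\widehat{\bigchi}}_{g,k}(\mathfrak{n})^\ast$ of non-degenerate framed representations.

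The inclusion ${\widehat{\bigchi}}_{g,k}(\mathfrak{n})^\ast \subseteq \text{cluster variety}$ carries all of the substantive content, and is essentially immediate from Theorem \ref{ab}. Indeed, given any non-degenerate $\hat\rho$, Allegretti--Bridgeland produce an ideal triangulation $T$ of $S_g(\mathfrak{n})$ at which the coordinate map $\phi_T$ is defined; this is exactly what it means for $\hat\rho$ to be generic with respect to $T$, so $\hat\rho$ belongs to the cluster variety.

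For the reverse inclusion, suppose $\hat\rho$ is generic with respect to some ideal triangulation $T$. The paragraph preceding the proposition records the key consequence of genericity: the endpoints of every edge of $T$, including the boundary arcs of $S_g(\mathfrak{n})$, are assigned distinct points of $\cp$ by the framing $\beta_\rho$. I would use this to rule out both degeneracy conditions. Condition ($D1$) fails at once, because each boundary arc of $S_g(\mathfrak{n})$ is an edge of $T$, and by $\rho$-equivariance the distinctness of framing values at endpoints propagates to every lift $\tilde I \subset \mathcal{F}_\infty$. For condition ($D2$), since $S_g(\mathfrak{n})$ has negative Euler characteristic the ideal triangulation $T$ contains at least one triangle $\Delta$; if the image of $\beta_\rho$ were contained in some two-point set $\{p_-, p_+\} \subset \cp$, then a pigeonhole argument would force two of the three vertices of a lift of $\Delta$ to receive the same value, making the edge of $\Delta$ joining them an edge with coincident endpoints, contradicting genericity.

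I do not anticipate any substantive obstacle: the non-trivial content is already packaged in Theorem \ref{ab}, and the reverse inclusion reduces to the short combinatorial verification sketched above.
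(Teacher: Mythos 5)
Your proposal is correct and follows essentially the same route as the paper's proof: one inclusion is exactly Theorem \ref{ab}, and the reverse inclusion rules out ($D1$) directly from genericity on boundary arcs and ($D2$) by the same pigeonhole argument on the three vertices of a triangle. No substantive differences.
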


\begin{proof}
The inclusion in one direction, namely that any non-degenerate representation lies in the cluster variety, is Theorem \ref{ab} above. 
Conversely, suppose $L$ is a point in the cluster variety. Then $L$ is generic with respect to some ideal triangulation $T$. If $L$ is degenerate, then it must have one of the properties ($D1$) or ($D2$) in \S2.4.2.  If L has the property ($D1$), then there is some boundary arc $e$ such that its endpoints are assigned the same point in $\cp$, contradicting genericity. Therefore, suppose $L$ has the property ($D2$). If $\Delta$ is any triangle of $T$, then it follows that the points assigned to the three vertices belong to a fixed pair of points in $\cp$. In particular, there is some edge $e$ of $\Delta$ whose endpoints are assigned identical points in $\cp$, which is again a contradiction. Hence $L$ cannot satisfy either ($D1$) or ($D2$), that is, it is a non-degenerate framed representation.
\end{proof}

\section{Proof of Theorem \ref{thm1}}

 Fix a non-degenerate framed representation $\hat{\rho} \in {\widehat{\bigchi}}_{g,k}(\mathfrak{n})$. Our goal is to define a meromorphic projective structure $P\in \mathcal{P}_g(\mathfrak{n})$ whose (decorated) monodromy is $\hat{\rho}$.  Recall that the data of a framed representation includes a representation $\rho:\pi_1(S_{g,k}) \to \pslc$, together with a $\rho$-equivariant map $\beta_\rho:\mathcal{F}_\infty \to \cp$.

\subsection{Framed representation $\longrightarrow$ pleated plane} 
In this section, we show how $\hat{\rho}$ determines a geometric object (a pleated plane in $\mathbb{H}^3$). The construction here is adapted from the one in the proof of Theorem 6.1 in \cite{FG}. \\

In what follows, $\widetilde{S}$ shall denote the universal cover of the marked and bordered surface $S_g(\mathfrak{n})$. Note that $S_g(\mathfrak{n})$ is homotopy equivalent to the punctured surface $S_{g,k}$ and hence  $\widetilde{S}$ admits an action of  $\pi_1(S_g,k)$.

First, fix an ideal  triangulation $T$ such that the map $\phi_T$ in Equation \eqref{phit} is defined for $\hat{\rho}$ -- such a $T$ exists by Theorem \ref{ab}. 
Thus, there is an assignment of complex cross-ratios $\mathsf{C} = \mathsf{C}(\hat{\rho}, T)$ to the arcs of the triangulation $T$.  This data of cross-ratios $\mathsf{C}$ then determines a $\rho$-equivariant map (well-defined up to post-composition by $\pslc$)
\begin{equation}\label{pp}
\Psi: \widetilde{S} \to \mathbb{H}^3
\end{equation}
as follows:

The triangulation $T$ lifts to a triangulation $\widetilde{T}$ of $\widetilde{S}$. For any triangle $\Delta$ of $\widetilde{T}$, the image of $\Psi$ will be a totally-geodesic ideal triangle in $\mathbb{H}^3$; thus the image of $\Psi$ is uniquely determined if we specify the points on $\cp = \partial \mathbb{H}^3$ that the (ideal) vertices of $\Delta$ map to. 

We start with a choice of a triangle $\Delta_0$ with vertices $\{v_0,v_1,v_2\}$  in the triangulation $\widetilde{T}$ such that the flags (points in $\cp$) corresponding to the three vertices determined by $\hat{\rho}$, are distinct. This is possible due to the fact that $\hat{\rho}$ is non-degenerate and follows immediately from Condition ($D2$) in \S2.4.2 (see also Remark 4.4 (ii) of \cite{AllBrid}).  We require that $\Psi$ will map these vertices to $0,1, \infty \in \cp$. 

Let $\Delta_1$ be a triangle adjacent to $\Delta_0$  sharing a  geodesic side $\widetilde{a} \in \widetilde{T}$ with endpoints $v_0$ and $v_2$; let  $v_3$ be the remaining vertex of $\Delta_1$. Then we define the image of $v_3$ under $\Psi$ to be the point of $\cp$ such that the cross-ratio of the ordered tuple $\{v_0,v_1,v_2,v_3\}$ is  $C(\hat{\rho},a)$.

Let $\mathsf{T}$ be the  dual tree to the triangulation $\widetilde{T}$, defined by having a vertex for each triangle, and an edge for a pair of adjacent triangles. The construction above describes how the image of the triangle corresponding to a vertex of $\mathsf{T}$ determines the image of the triangle corresponding to an adjacent vertex.  Since $\widetilde{S}$ is connected, so is $\mathsf{T}$, and proceeding inductively along the edges of $\mathsf{T}$, we can determine the entire image of the map $\Psi$ (and the map itself up to an equivariant isotopy).\\

Note that the image of the map $\Psi$ is a \textit{pleated plane} in the sense of Thurston (see \cite[Chapter 8]{thurstonnotes}). 

Moreover, it follows from the construction that
\begin{itemize}
\item[(a)] the map $\Psi$ is $\rho$-equivariant, and 
\item[(b)] it determines a $\rho$-invariant map $\beta_\rho$ from the ideal vertices, i.e.\ the lifts of the distinguished points on each boundary, to $\cp$, 
\end{itemize} 
where $\rho$ and the map $\beta_\rho$ are exactly the data of the framed representation $\hat{\rho}$.  This is in fact the proof of the injectivity of the map $\phi_T$ in Theorem \ref{fg}; for details see the proof of Theorem 6.2 in \cite{FG}.

 \subsection{Pleated plane $\longrightarrow$ projective structure}
 Recall that we have fixed a non-degenerate framed representation $\hat{\rho} \in {\widehat{\bigchi}}_{g,k}(\mathfrak{n})$. The construction in the previous section determines a pleated plane in $\mathbb{H}^3$, given by Equation \eqref{pp}. We shall denote the pleated plane by $\mathcal{P}$.
 
\subsubsection{Crowned hyperbolic surface} We first describe how the pleated plane $\mathcal{P}$ determines a crowned hyperbolic surface $\widehat{S} \in  \TT_g(\mathfrak{n})$.\\

Recall that from our construction, the pleated plane $\mathcal{P}$ is pleated along the collection of geodesic lines $\widetilde{T}$ arising from the lift of the triangulation $T$ of the marked bordered surface $S_g(\mathfrak{n})$.  

At each arc  $a\in T$, the complex cross-ratio $C(\hat{\rho},a)$ can be interpreted as a complex ``shear-bend" parameter, and we can determine the universal cover of a crowned hyperbolic surface $\widehat{S}$ by ``straightening" the bends. The non-degeneracy of $\hat\rho$ (see  Condition ($D1$) in \S2.4.2 ensures that the number of boundary cusps in the $i$-th crown is exactly $(n_i-2)$, for each $1\leq i\leq k$, and hence $\widehat{S}\in \TT_g(\mathfrak{n})$.
 
Alternatively,  the Teichm\"{u}ller space of crowned hyperbolic surfaces  $\TT_g(\mathfrak{n})$ can be parametrized by shear-coordinates based on the triangulation $T$  (\textit{c.f.} Definition 6.2 (a) of \cite{FG}). See, for example, \cite{BBFS} for an account of shear-coordinates in the usual Teichm\"{u}ller spaces.  The positive real parameters given by the modulus $\lvert C(\hat{\rho},a)\rvert$ for $a\in T$ are then the shear parameters for a crowned hyperbolic surface  $\widehat{S}\in \TT_g(\mathfrak{n})$.

\subsubsection{Measured lamination} The pleated plane $\mathcal{P}$ also determines a measured geodesic lamination $\lambda$ on the crowned hyperbolic surface $\widehat{S}$, as we now describe.  

The lamination $\lambda$ is defined to be the one comprising

\begin{itemize}
\item[(i)]  finitely many disjoint geodesic lines on $\widehat{S}$ that descend from the geodesic lines of $\widetilde{T}$; these correspond to the arcs of the original triangulation $T$, and each arc $a$ is given a  weight equal to the bending angle $\beta(a) \in [0,2\pi)$ that one sees at a lift $\tilde{a}$ on the pleated plane $\mathcal{P}$, and

\item[(ii)] the geodesic sides of the crown ends of $\widehat{S}$, each with infinite weight. 

\end{itemize}

Note that since $\widehat{S}$ is a crowned hyperbolic surface in $\TT_g(\mathfrak{n})$, the measured lamination  $\lambda \in {ML}_g(\mathfrak{n})$ (\textit{c.f.} \S2.3).

\subsubsection{Concluding the proof} 
 From  Proposition \ref{prop:graft}, we know that grafting the crowned hyperbolic surface $\widehat{S}$ along ${\lambda}$ results in a meromorphic projective structure $P \in \mathcal{P}_g(\mathfrak{n})$.
 
 Recall that such a projective structure has an equivariant developing map from $\widetilde{S}$ to $\cp=\partial \mathbb{H}^3$, that determines an equivariant map $\Psi_P:\widetilde{S} \to \mathbb{H}^3$  as in Map \eqref{pplane}. For some more details, we refer to the proof of  Theorem 2.1 in \cite{GM1}. 

Moreover, note that the projective structure $P$ arises from a grafting construction. Thus, the equivariant map $\Psi_P$ is obtained by identifying $\widetilde{S}$ with a subset of the equatorial disk in $\mathbb{H}^3$, and bending equivariantly along the lifts of  the leaves of  $\lambda$. These leaves are the arcs of the triangulation $T$, and the bending angle at a  leaf $\tilde{a}$ is $\beta(a)$ (see (i) in \S3.2.2). Infinite grafting along any geodesic line that is a lift of a crown boundary does not affect its endpoints in $\cp$. This implies that the map $\beta_\rho:\mathcal{F}_\infty \to \cp$ (see \S2.4.1) is determined by the leaves of $\lambda$ of finite weight.

 By our choice of $\widehat{S}$ and grafting lamination $\lambda$ (in particular, the weights that determine the bending angles) above, it follows that the pleated plane arising from the grafting construction is precisely  $\mathcal{P}$ that we constructed earlier.   That is, up to an equivariant isotopy, the equivariant map $\Psi_P$  is exactly the map $\Psi$ as in Equation \eqref{pp}.  By Properties (a) and (b) of $\Psi$ as observed at the end of \S2.2, the monodromy of the projective structure $P$ is $\hat{\rho}$. \\

Since $\hat{\rho}$ chosen at the beginning of \S2.2 was an arbitrary non-degenerate framed representation in ${\widehat{\bigchi}}_{g,k}(\mathfrak{n})^\ast$, this completes the proof of Theorem 1.1.

\end{document}